\documentclass{amsart}

\usepackage[T1]{fontenc}
\usepackage[utf8]{inputenc}
\usepackage{amsmath,amssymb,mathtools}
\usepackage[shortlabels]{enumitem}
\usepackage{csquotes}
\usepackage[hidelinks]{hyperref}
\hypersetup{
  pdftitle={Counterexamples to the Generalized Gaifman Conjecture},
  pdfauthor={Yi Zhang}
}
\usepackage[
  backend=biber,
  style=alphabetic,
  maxbibnames=15,
  maxcitenames=6,
  dateabbrev=false
]{biblatex}

\title{Counterexamples to the Generalized Gaifman Conjecture}

\author[Zhang]{Yi Zhang}
\address{Yi Zhang\\College of Intelligent Robotics and Advanced Manufacturing,
  Fudan University, 220 Handan Road, Shanghai 200433, China}
\email{pacisaury@gmail.com}

\newtheorem{theorem}{Theorem}[section]
\newtheorem{proposition}[theorem]{Proposition}

\theoremstyle{definition}

\newtheorem{question}[theorem]{Question}
\theoremstyle{remark}

\newcommand{\C}{\mathfrak C}

\newcommand{\Cutstar}{\operatorname{Cut}^{*}}

\DeclareMathOperator{\ded}{ded}

\begin{document}

\begin{abstract}
Shelah and Usvyatsov proposed the Generalized Gaifman Conjecture \cite[Conjecture~1.1]{SU25}. We give a negative answer.
\end{abstract}

\maketitle

\tableofcontents

\section{Introduction}

Fix a countable relational language \(L\), a complete first-order
\(L\)-theory \(T\), and a distinguished unary predicate \(P\in L\).
The Gaifman property says that every model of the theory of \(P\) occurs as the P-part
of some model of T.  Relative
categoricity over \(P\) means that two models with the same
\(P\)-part are isomorphic by an isomorphism fixing that part pointwise.
Gaifman's original question asks whether relative categoricity implies the
Gaifman property \cite[p.~30]{Gai74}; see also the modern formulation in
\cite[Definition~2.2]{SU25}.  The question remains open
\cite[p.~1]{Pil26}.

Shelah and Usvyatsov proposed a stronger statement.  Their Generalized
Gaifman Conjecture asserts that, for every sufficiently large regular
cardinal \(\kappa\) and every cardinal \(\lambda\geq\kappa\), failure of the
Gaifman property yields \(2^\kappa\) models of cardinality \(\lambda\),
non-isomorphic over a common \(P\)-part. See \cite[Conjecture~1.1]{SU25} for a detailed discussion. We construct a theory that gives a negative answer to this conjecture.

Let \(\C\models T\) be a monster model.  If \(M\) is an \(L\)-structure, write \(P^M\) for the
interpretation of \(P\) in \(M\), let \(M^P\) denote the induced
\(L\)-substructure with universe \(P^M\), and put $
  T^P=\operatorname{Th}(\C^P).
$


We recall the notion in \cite[p.~367]{PS85} used by Pillay and Shelah.
Let \(N\models T^P\).
\(M_0\cong_N M_1\) if \(M_0\) and \(M_1\) are isomorphic by an
isomorphism fixing \(N\) pointwise.
For cardinals \(\lambda,\mu\), define$
 I_T(\lambda,N)=
 \left|
   \left\{M\models T:|M|=\lambda,\ M^P=N\right\}/\cong_N
 \right|,$
and
$
  I_T(\lambda,\mu)=
  \sup\left\{I_T(\lambda,N):N\models T^P,\ |N|=\mu\right\}.
$
We replace their displayed maximum by a supremum.

\textbf{LLM disclosure}
The author found the proof with the assistance of ChatGPT. The author takes full responsibility for the entire content of the paper.

\section{The discrete theory}
\label{sec:selector}

\begin{theorem}
  \label{thm:discrete}
  There is a complete stable theory $T$ in the finite relational language $L=\{P,E\}$, with a distinguished unary predicate $P$, such that for every infinite cardinal $\kappa$:
  \begin{enumerate}
    \item There is some $N\models T^P$ of size $\kappa$ such that $I_T(\kappa, N) = 0$.
    \item $I_T(\kappa,\kappa) = \kappa$.
  \end{enumerate}
\end{theorem}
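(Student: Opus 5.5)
The plan is to build $T$ by hand as a theory of an equivalence relation with a \enquote{selector} attached to the $P$-part. $E$ will be an equivalence relation on the whole universe, and the non-$P$ points serve as selectors for some of the $E$-classes of $P$. The design principle is that $T^P$ cannot tell finite classes of unbounded size apart from infinite classes, while $T$ can: it says that every non-$P$ point sits in an infinite class.

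\emph{Choice of axioms.} A first attempt for $T$ is the following list.
\begin{enumerate}
\item[(a)] $E$ is an equivalence relation.
\item[(b)] On $P$, for every $n\ge 1$ there are infinitely many $E$-classes of size exactly $n$.
\item[(c)] Every $E$-class containing a non-$P$ point contains exactly one non-$P$ point.
\item[(d)] For each $n$, no class meeting $\neg P$ has exactly $n$ elements in $P$.
\item[(e)] There is at least one non-$P$ point.
\end{enumerate}

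\emph{Completeness and stability.} I would show these by quantifier elimination after naming the formulas \enquote{the $P$-part of the class of $x$ has size $n$}, and by counting types. This is a standard back-and-forth for equivalence relations with colored classes. $T$ is then a reduct-style theory of an equivalence relation, hence $\omega$-stable.

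\emph{Failure of the Gaifman property (item (1)).} Take $N$ of size $\kappa$ consisting of $\kappa$ classes of each finite size and no infinite classes. This $N$ is a model of $T^P$, since $T^P$ is axiomatized by (a) and (b) restricted to $P$. However, (d) and (e) force some infinite $P$-class, so $I_T(\kappa,N)=0$.

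\emph{Counting (item (2)).} Over an $N$ with $\lambda$ infinite classes, a model $M$ with $M^P=N$ is determined up to $\cong_N$ by which infinite classes receive a selector. This raw count gives too many models: up to $2^\lambda$. So the axioms must be tuned so that the invariant over $N$ is a single quantity ranging over exactly $\kappa$ values rather than a subset of $\lambda$.

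The fix I would try first is the following.
\begin{itemize}
\item Demand that every infinite $P$-class receives exactly one selector, so the attachment is forced and carries no freedom.
\item Add a separate sort of \enquote{loose} non-$P$ points. These form $E$-classes disjoint from $P$, with exactly one such class of each finite size $n$.
\end{itemize}
The isomorphism type over $N$ is then determined by the loose infinite classes and their sizes, which must be at most $\kappa$. The lower bound $I_T(\kappa,\kappa)\ge\kappa$ should come from varying these loose classes. The upper bound $\le\kappa$ needs the invariant space to have size exactly $\kappa$.

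\emph{Main obstacle.} Step (2) is where I expect the real difficulty. The naive invariants are either a subset of the infinite classes, which is too many, or a multiset of infinite cardinals, whose count need not equal $\kappa$: for instance, the number of cardinals $\le\kappa$ is generally smaller than $\kappa$. I would therefore adjust the loose part until its invariant is a single natural number together with a bounded choice of sizes, indexed by something of size exactly $\kappa$. One candidate is elements of $N$ that the loose part is allowed to point to via $E$, each pointing giving a distinct $\cong_N$-class. I would then verify the counts by an explicit back-and-forth over $N$ in each direction.
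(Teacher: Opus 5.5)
\textbf{There is a genuine gap at item (2).} It comes from an oversight in your first attempt, not from any real difficulty.

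Axioms (a)--(e) are not complete: they do not decide how many points lie outside $P$. So your quantifier-elimination argument cannot prove completeness, and you must choose a completion. Your estimate ``up to $2^\lambda$'' is correct only for the completion with infinitely many non-$P$ points.

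Add instead the single axiom ``there is exactly one point outside $P$''. The result is exactly the paper's theory: the theory of a countable equivalence relation with infinitely many classes of each finite size and one infinite class, with one point of the infinite class removed from $P$. With this axiom the count works as follows.
\begin{enumerate}
\item A model $M$ with $M^P=N$ is $N$ plus one point $e_M$ lying in an infinite class of $N$.
\item Conversely, by the usual back-and-forth, adjoining one point to any infinite class of $N$ gives a model of $T$.
\item Two such models are isomorphic over $N$ if and only if they use the same class.
\end{enumerate}
Hence $I_T(\kappa,N)$ is the number of infinite classes of $N$. It is $0$ for your $N$ in item (1), at most $\kappa$ for every $N$ of size $\kappa$, and exactly $\kappa$ when $N$ has $\kappa$ infinite classes. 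Any fixed finite number $m$ of non-$P$ points would also work, giving $\binom{\lambda}{m}=\lambda$ choices for infinite $\lambda$.

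\textbf{The fix you propose instead cannot be implemented.} ``Every infinite $P$-class receives a selector'' is not first-order. Consider the type of a $P$-point whose class has at least $n$ $P$-elements for every $n$ and contains no non-$P$ point. It is finitely satisfiable: use the finite classes from (b), which carry no selector by (d). So by compactness it is realized in some model of any complete $T$ extending your axioms. This is the same indistinguishability of large finite and infinite classes that you exploit for item (1).

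The rest of the fix does not close the gap either. The ``loose classes'' construction is left unspecified. You yourself observe that its multiset-of-cardinals invariant need not take exactly $\kappa$ values. The closing suggestion about loose points ``pointing into $N$'' is not a construction.

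So item (2) is never actually established. The missing idea is simply to fix the number of non-$P$ points to a finite value (indeed one) in the complete theory.
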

\begin{proof}
  Let $(A,E)$ be a countable equivalence relation having infinitely many classes of size $n$ for every $n\geq 1$, and one infinite class $C$. Choose $e\in C$, interpret $P$ as $A\setminus\{e\}$, and let $T=\operatorname{Th}(A,E,P)$.

  Let $N\models T^P$. Every extension $M\models T$ with $M^P=N$ has a unique element $e_M$ outside $N$, and $\{a\in N:E(e_M,a)\}$
  is an infinite $E$-equivalence class of $N$. Conversely, for every infinite $E$-equivalence class $D$ of $N$, extending $N$ by one element $e$ and making $e$ belong to $D$ gives a model $M_D\models T$ with $M_D^P=N$. The usual finite back-and-forth analysis of equivalence relations shows that every extension of $N$ to a model of $T$ is obtained this way. Moreover, $M_D$ and $M_{D'}$ are isomorphic over $N$ if and only if $D=D'$. Therefore
  $I_T(|N|,N)$ is the number of infinite $E$-equivalence classes of $N$.

  For every infinite $\kappa$ and every $\lambda\leq\kappa$, there is a model $N\models T^P$ of size $\kappa$ with exactly $\lambda$ infinite $E$-equivalence classes: take $\kappa$ classes of size $n$ for every $n\geq 1$, and add exactly $\lambda$ infinite classes. Taking $\lambda=0$ proves the first item. For every $N$ of size $\kappa$, the displayed equality gives $I_T(\kappa,N)\leq\kappa$, while taking $\lambda=\kappa$ attains this bound. Hence $I_T(\kappa,\kappa)=\kappa$.
\end{proof}

\begin{proposition}
  The following are immediate consequences of \autoref{thm:discrete}.
  \begin{enumerate}
    \item The Generalized Gaifman Conjecture fails.
    \item For any $\lambda\leq \kappa$, there is some $N$ of size $\kappa$, such that $I_T(\kappa, N) = \lambda$.
    \item $T$ is stable, and $P$ is stably embedded.
  \end{enumerate}
\end{proposition}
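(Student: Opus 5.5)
We take the theory $T=\operatorname{Th}(A,E,P)$ built in the proof of \autoref{thm:discrete} and treat the three items in turn.

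\emph{Item (1).} The Generalized Gaifman Conjecture says that for every sufficiently large regular $\kappa$ and every $\lambda\geq\kappa$, failure of the Gaifman property yields $2^\kappa$ models of size $\lambda$ that are pairwise non-isomorphic over a common $P$-part. By \autoref{thm:discrete}(1), for every infinite $\kappa$ there is $N\models T^P$ of size $\kappa$ with $I_T(\kappa,N)=0$. Since every $M\models T$ has $|M|=|M^P|+1$, no model of $T$ of any size has $P$-part $N$, so the Gaifman property fails for $T$. On the other hand, \autoref{thm:discrete}(2) gives $I_T(\kappa,\kappa)=\kappa<2^\kappa$ for every infinite $\kappa$, and in particular for every regular $\kappa$, taking $\lambda=\kappa$. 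Hence there are never $2^\kappa$ models of size $\kappa$ pairwise non-isomorphic over a common $P$-part of size $\kappa$.

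One point needs checking. The conjecture allows the common $P$-part to have any size, not only $\kappa$. But a model of size $\kappa$ has $|M^P|=\kappa$, because $M$ has exactly one element outside $P$. So $I_T(\kappa,\kappa)$ really does bound the number of such models over any common $P$-part, and the conjecture fails. This careful matching of the quantifiers in \cite[Conjecture~1.1]{SU25} is the only delicate step.

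\emph{Item (2).} This is the intermediate claim in the proof of \autoref{thm:discrete}. The model $N$ with $\kappa$ classes of each finite size $n\geq1$ together with exactly $\lambda$ infinite classes has size $\kappa$ and satisfies $T^P$. The latter holds because $T^P$ only requires infinitely many classes of each finite size and at least one infinite class, and this is the one point to watch: the case $\lambda=0$ is still a model of $T^P$ by compactness, since the infinite class is not definably required. The displayed equality then gives $I_T(\kappa,N)=\lambda$.

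\emph{Item (3).} $T$ is the theory of an equivalence relation with a unary predicate and a single extra point. It is interpretable in, indeed is, a theory of one equivalence relation expanded by a predicate differing from the whole universe by one element. I would argue stability either by counting types over a countable set, which gives $\omega$-stability, or by noting that the structure is interdefinable with the pure equivalence relation after naming the constant $e$, and naming constants preserves stability. For stable embeddedness, recall that in a stable theory every definable subset of $P^k$ is definable with parameters from $P$, using definability of types. Alternatively, argue directly: after naming $e$, every formula with parameters reduces by quantifier elimination for equivalence relations to Boolean combinations of $E(x_i,x_j)$, $x_i=c$ and $E(x_i,c)$. Any parameter $c\notin P$ is $e$, and $E(x,e)$ restricted to $P$ equals $E(x,a)$ for any $a\in P$ with $E(a,e)$, which exists because the class of $e$ is infinite.
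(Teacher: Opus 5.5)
Your items (1) and (2) are the paper's argument written out in more detail. Your stability argument is the paper's observation: an equivalence relation with a co-singleton predicate is interdefinable with an equivalence relation with a named point. The real difference is stable embeddedness. Your main argument uses the general fact that in a stable theory every definable set is stably embedded: $\operatorname{tp}(\bar b/P(M))$ is definable over $P(M)$ for a small $M\prec\C$ containing $\bar b$, and the resulting equivalence transfers to $\C$ by elementarity. The paper instead checks it by hand. The only trace on $P$ that could need the outside point is the class of the non-$P$ element, and that class is defined on $P$ by $E(x,b)$ for any $b\in P$ in it. Your route needs no description of the definable sets and works for any stable $T$. The paper's route is elementary but tacitly presupposes such a description.

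There are two slips worth fixing.
\begin{enumerate}
\item In your direct alternative for item (3), quantifier elimination fails in the pure language. For instance, $\exists y\,(y\neq x\wedge E(x,y))$ is not a Boolean combination of $E(x_i,x_j)$, $x_i=c$, $E(x_i,c)$. You must add the $\emptyset$-definable predicates ``the class of $x$ has at least $n$ elements''. These are harmless here because they use no parameters.
\item In item (2), the clause saying $T^P$ requires at least one infinite class is false, and it contradicts your next sentence. The reason $\lambda=0$ is allowed is not compactness. It is the completeness of the theory of an equivalence relation with infinitely many classes of each finite size, which is a back-and-forth argument: an infinite class is imitated by a large finite one. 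In any case this is already \autoref{thm:discrete}(1), which you may simply cite.
\end{enumerate}

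Finally, quantifier matching is not the only delicate point in item (1). Right after this proposition the paper notes that $T$ fails Hypothesis~2.1(ii) of \cite{SU25}. After Morleyization, the trace of the special class becomes part of the induced structure on $P$, and then every model of the new $T^P$ extends, so the Gaifman property is restored. Item (1) therefore depends on applying the conjecture to $T$ in its original language. Neither your write-up nor the paper's one-line proof spells out this caveat.
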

\begin{proof}
  The first item is clear. The second item follows from the calculation above. The theory is an equivalence relation with a distinguished predicate whose complement is a singleton, so it is stable. If $b\in P$ is equivalent to the unique point outside $P$, then its $E$-class is defined on $P$ by $E(x,b)$; hence $P$ is stably embedded.
\end{proof}

The theory is stable but fails
Hypothesis~2.1(ii) of Shelah--Usvyatsov \cite[Hypothesis~2.1(ii)]{SU25}. This shows that Morleyization can change that induced
theory; see also Usvyatsov's explicit
warning \cite[pp.~20--21]{Usv25}.

\section{The dense theory}
\label{sec:dense}
Using the same mechanism, we prove that $I_T(\kappa,\kappa)$ can also fall between $\kappa$ and $2^\kappa$ under some assumption of cardinal arithmetic.

We recall the definition in Chernikov--Shelah \cite[Fact~1.1]{CS16}.
Suppose that $I$ is a linear order. A cut is a pair $(A,B)$ such that $I=A\mathbin{\dot\cup}B$ and every element in $A$ is smaller than every element in $B$. Let $\operatorname{Cut}(I)$ be the collection of all such pairs. For an infinite cardinal $\kappa$, define
\[
  \ded(\kappa)=\sup\bigl\{|\operatorname{Cut}(I)|: I \text{ is a linear order and } |I|\leq\kappa\bigr\}.
\]
By \cite[Fact~1.1]{CS16}, this is equivalent to the standard definition using linear orders with a dense subset of size at most $\kappa$. If $D$ is a dense linear order, let $\Cutstar(D)$ be the set of cuts $(A,B)$ for which $A$ and $B$ are nonempty, $A$ has no greatest element, and $B$ has no least element.
\begin{theorem}
  \label{thm:dense}
  There is a complete theory in finite language, with a distinguished unary predicate $P$, such that:
  \begin{enumerate}
    \item $T$ fails the Gaifman property.
    \item for every infinite cardinal $\kappa$, $I_T(\kappa, \kappa)=\ded(\kappa)$, and for some $N\models T^P$, $\kappa<I_T(\kappa,N)\leq\ded(\kappa)$.
  \end{enumerate}
\end{theorem}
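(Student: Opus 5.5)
We take \(L=\{P,<\}\) and copy the mechanism of \autoref{thm:discrete}, with cuts of a dense order playing the role of infinite \(E\)-classes. Let \(A=\mathbb Q\cup\{\sqrt2\}\subseteq\mathbb R\) with the induced order, put \(P^A=\mathbb Q\), and let \(T=\operatorname{Th}(A,<,P)\). We first check that \(T\) is axiomatized by three conditions: \(<\) is a dense linear order without endpoints; there is exactly one element outside \(P\); and \(P\) is dense, i.e.\ between any two elements there is an element of \(P\) and \(P\) has no endpoints. To see this is complete, it suffices to show the axioms are \(\aleph_0\)-categorical. A countable model is \(P\cup\{e\}\) where \(P\) is a countable DLO. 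The point \(e\) splits \(P\) into two nonempty countable DLOs: the left part has no greatest element and the right part has no least element, by density of \(P\). An isomorphism between two countable models can therefore be glued from Cantor back-and-forth isomorphisms of the left parts and of the right parts, sending \(e\mapsto e'\). In particular \(T^P\) is the theory of dense linear orders without endpoints.

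The key step is the analogue of the displayed equality in \autoref{thm:discrete}: for \(N\models T^P\),
\[
  I_T(|N|,N)=|\Cutstar(N)|.
\]
Given an extension \(M\) with \(M^P=N\), let \(e_M\) be its unique non-\(P\) element. The cut \((\{a\in N:a<e_M\},\{a\in N:a>e_M\})\) lies in \(\Cutstar(N)\): both sides are nonempty because \(P\) has no endpoints, the left side has no maximum by density of \(P\), and similarly the right side has no minimum. Conversely, for any \((X,Y)\in\Cutstar(N)\), inserting one new point between \(X\) and \(Y\) satisfies all three axioms, so it is a model \(M_{(X,Y)}\). An isomorphism over \(N\) must send \(e\) to \(e'\) and preserve order, so it preserves the associated cut. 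Hence distinct cuts give non-isomorphic extensions, and equal cuts give identical ones.

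For (1), note that \(N=(\mathbb R,<)\models T^P\) is Dedekind complete, so \(\Cutstar(N)=\emptyset\) and \(N\) has no extension; thus the Gaifman property fails. For (2), the upper bound is \(I_T(\kappa,N)=|\Cutstar(N)|\le|\operatorname{Cut}(N)|\le\ded(\kappa)\) for every \(|N|=\kappa\). For the lower bound, take any linear order \(I\) with \(|I|\le\kappa\) and let \(N=(I\times\mathbb Q)+(\kappa\times\mathbb Q)\) in the lexicographic order. This is a DLO of size \(\kappa\). Each cut \((X,Y)\) of \(I\) with \(X,Y\neq\emptyset\) gives the cut \((X\times\mathbb Q,\,N\setminus X\times\mathbb Q)\), which belongs to \(\Cutstar(N)\) because \(\mathbb Q\) has no endpoints, and different cuts of \(I\) give different cuts of \(N\). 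So \(|\Cutstar(N)|\ge|\operatorname{Cut}(I)|-2\), and taking the supremum over \(I\) yields \(I_T(\kappa,\kappa)=\ded(\kappa)\).

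The remaining point, and the only real obstacle, is producing one \(N\) with more than \(\kappa\) such cuts; by the construction above it suffices to find \(I\) with \(|I|\le\kappa\) and \(|\operatorname{Cut}(I)|>\kappa\). Let \(\mu\) be least with \(2^\mu>\kappa\), and let \(I=2^{<\mu}\) ordered lexicographically. Then \(|I|\le\sum_{\nu<\mu}2^\nu\le\kappa\cdot\mu=\kappa\), since \(\mu\le\kappa\). Each branch \(\eta\in2^\mu\) determines the cut \(\{s\in I: s<_{\mathrm{lex}}\eta\}\), and distinct branches determine distinct cuts because they split at some node. Hence \(|\operatorname{Cut}(I)|\ge2^\mu>\kappa\), and the resulting \(N\) satisfies \(\kappa<I_T(\kappa,N)\le\ded(\kappa)\).
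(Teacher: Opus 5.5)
Your proposal is correct and follows the paper's route. It uses the same theory: a dense $P$ with a single extra point, which is isomorphic to the paper's $\mathbb{Q}$ with $P=\mathbb{Q}\setminus\{c\}$. It also uses the same identity $I_T(|N|,N)=|\Cutstar(N)|$ and the same witness $(\mathbb{R},<)$ for the failure of the Gaifman property. Beyond the paper's sketch, you also prove completeness via an explicit axiomatization, and you use $(I\times\mathbb{Q})+(\kappa\times\mathbb{Q})$ to turn $|\operatorname{Cut}(I)|$ into $|\Cutstar(N)|$ with $|N|=\kappa$. Finally, you prove $\ded(\kappa)>\kappa$ with the tree $2^{<\mu}$, a fact the paper only asserts.
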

\begin{proof}
  Suppose that $L_{\mathrm{cut}}=\{P, <\}$. Now take the rational order $(\mathbb{Q},<)$, choose one point $c\in \mathbb{Q}$, interpret $P$ as $\mathbb{Q}\backslash \{c\}$. Let $M_{\mathrm{cut}}$ be the resulting structure. Now let $T_{\mathrm{cut}}= \mathrm{Th}(M_{\mathrm{cut}})$. In theory $T_{\mathrm{cut}}$, $c$ should be a nonprincipal cut.

  We first show that $T_{\mathrm{cut}}$ fails the Gaifman property over $P$. The order $(\mathbb{R},<)$ is a model of $T_{\mathrm{cut}}^P$, but it has no nonprincipal cut at which a new element can be inserted. Hence it cannot be the $P$-part of a model of $T_{\mathrm{cut}}$.

  Now we compute $I_{T_{\mathrm{cut}}}(\kappa, \kappa)$. Once the $P$-part is fixed, a model has exactly one additional element, and its position is determined by a cut. More precisely, for every $D\models T_{\mathrm{cut}}^P$,

   $ I_{T_{\mathrm{cut}}}(|D|,D)=|\Cutstar(D)|$. Taking the supremum gives $I_{T_{\mathrm{cut}}}(\kappa,\kappa)=\ded(\kappa)$. Since $\ded(\kappa)>\kappa$, there is some $N$ as we need.
\end{proof}

Mitchell proved that, for every cardinal $\kappa$ of uncountable cofinality, it is consistent that $\ded(\kappa)<2^\kappa$ \cite{Mit73}.
For a cardinal of countable cofinality, the same strict inequality is also consistent: Chernikov, Kaplan, and Shelah construct a model in which $\ded(\aleph_\omega)<(\ded(\aleph_\omega))^{\aleph_0}\leq 2^{\aleph_\omega}$ \cite[Section~6.2, especially Theorem~6.7, Claim~6.9, and Corollary~6.11]{CKS16}.

\begin{proposition}
  Assume that $\ded(\kappa)<2^\kappa$. There is a countable theory $T$ such that $T$ fails the Gaifman property, $I_T(\kappa,\kappa) <2^\kappa$ and $I_T(\kappa,\kappa) > \kappa $.
\end{proposition}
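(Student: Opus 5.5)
The plan is to take $T=T_{\mathrm{cut}}$ from the proof of \autoref{thm:dense}. Its language $L_{\mathrm{cut}}=\{P,<\}$ is finite, so $T$ is countable.

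The first two required properties come straight from \autoref{thm:dense}. Its item (1) says that $T_{\mathrm{cut}}$ fails the Gaifman property; the witness is $(\mathbb{R},<)$, a model of $T_{\mathrm{cut}}^P$ with no nonprincipal cut. Its item (2) gives $I_T(\kappa,\kappa)=\ded(\kappa)$ for every infinite $\kappa$. Combined with the hypothesis $\ded(\kappa)<2^\kappa$, this yields $I_T(\kappa,\kappa)<2^\kappa$ at once.

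It remains to show $I_T(\kappa,\kappa)>\kappa$, that is, $\ded(\kappa)>\kappa$. I would prove this directly rather than quote it. Let $\mu\le\kappa$ be the least cardinal with $2^\mu>\kappa$; such a $\mu$ exists because $2^\kappa>\kappa$. Consider the tree $2^{<\mu}$ with the lexicographic order. Since $2^\nu\le\kappa$ for every $\nu<\mu$, we have $|2^{<\mu}|\le\mu\cdot\kappa=\kappa$. Each branch $f\in 2^\mu$ determines a cut, namely the set of nodes lying lexicographically below $f$. Distinct branches first differ at some level, and the node at that splitting point separates the two cuts, so we obtain at least $2^\mu>\kappa$ distinct cuts of an order of size at most $\kappa$. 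Hence $\ded(\kappa)>\kappa$.

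To match the version stated via $\Cutstar$ in the proof of \autoref{thm:dense}, I would replace each node by a copy of $\mathbb{Q}$; this keeps the size at most $\kappa$ and makes the relevant cuts nonprincipal. Alternatively, we can simply rely on the last sentence of the proof of \autoref{thm:dense}, which already asserts $\ded(\kappa)>\kappa$ and produces some $N$ with $\kappa<I_T(\kappa,N)$; then $I_T(\kappa,\kappa)\ge I_T(\kappa,N)>\kappa$.

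The main point needing care is the size estimate $|2^{<\mu}|\le\kappa$, which relies on the minimality of $\mu$. Everything else is bookkeeping from \autoref{thm:dense}.
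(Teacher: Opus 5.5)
Your proposal is correct and follows the paper's route: take $T=T_{\mathrm{cut}}$, read off the failure of the Gaifman property and the equality $I_T(\kappa,\kappa)=\ded(\kappa)$ from \autoref{thm:dense}, and combine these with the hypothesis $\ded(\kappa)<2^\kappa$. The only difference is that you prove $\ded(\kappa)>\kappa$ with the standard tree argument (using the least $\mu$ with $2^\mu>\kappa$), which the paper simply asserts at the end of the proof of \autoref{thm:dense}; if you instead use your $\mathbb{Q}$-replacement order directly, pad it to size exactly $\kappa$, since $I_T(\kappa,\kappa)$ ranges over $N$ with $|N|=\kappa$.
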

\begin{proof}
  This follows from \autoref{thm:dense} and the hypothesis that $\ded(\kappa)<2^{\kappa}$.
\end{proof}

\begin{proposition}
  For every infinite cardinal $\kappa$, there is a theory $T$ with a distinguished unary predicate $P$ that fails the Gaifman property. For this theory, there is some $N\models T^P$ of size $\kappa$ such that $I_T(\kappa, N)= \kappa^{\aleph_0}$.
\end{proposition}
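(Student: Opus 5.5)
The plan is to reuse the mechanism of \autoref{thm:dense}, i.e.\ the theory $T_{\mathrm{cut}}$. We already know $T_{\mathrm{cut}}$ fails the Gaifman property and that $I_{T_{\mathrm{cut}}}(|D|,D)=|\Cutstar(D)|$ for every $D\models T_{\mathrm{cut}}^P$. So it suffices to exhibit, for each infinite $\kappa$, a dense linear order $N$ without endpoints, $|N|=\kappa$, with $|\Cutstar(N)|=\kappa^{\aleph_0}$.

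\textbf{Construction.} Take $N$ to be the set of eventually-zero sequences in $\kappa^{<\omega}$-style form. Concretely, let $N$ be the set of functions $f\colon\omega\to\mathbb{Q}\times\kappa$ that are eventually equal to a fixed value, ordered lexicographically. Here $\mathbb{Q}\times\kappa$ is ordered by taking $\kappa$ copies of $\mathbb{Q}$ in order type $\kappa$, which is dense without endpoints. Then $|N|=\kappa\cdot\aleph_0=\kappa$.

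A simpler choice is $N=\bigl(\mathbb{Q}\times\kappa\bigr)^{(\omega)}$, the eventually-constant sequences, again ordered lexicographically. Checking that it is dense without endpoints is routine, using density of the factor order.

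\textbf{Lower bound.} Each arbitrary sequence $g\in(\mathbb{Q}\times\kappa)^\omega$ that is not eventually constant determines the cut
\[
A_g=\{f\in N: f<_{\mathrm{lex}} g\},\qquad B_g=N\setminus A_g .
\]
I will verify three things:
\begin{enumerate}
\item $A_g$ has no greatest element and $B_g$ has no least element. This uses that $g$ is not eventually constant and that the factor is dense: any $f<g$ first differs at some coordinate $n$, and one can increase $f$ at a later coordinate while staying below $g$.
\item The map $g\mapsto(A_g,B_g)$ is injective, since two distinct $g$ are separated by some element of $N$.
\item There are $\kappa^{\aleph_0}$ sequences that are not eventually constant.
\end{enumerate}
Together these give $|\Cutstar(N)|\ge\kappa^{\aleph_0}$.

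\textbf{Upper bound.} This is the main obstacle. Cuts in $\Cutstar(N)$ need not all come from sequences $g$; for example, a cut may sit at the limit of a coordinate through the $\kappa$-indexed copies of $\mathbb{Q}$, at a place of cofinality other than $\omega$. I will handle these by the following classification. Given a cut, define $g$ coordinate by coordinate: at stage $n$, either the cut determines a value $g(n)$ and we continue, or it falls into a cut of the factor order $\mathbb{Q}\times\kappa$, and we stop. So every cut is coded by a finite or infinite sequence of values together with possibly one cut of the factor. The number of such codes is at most
\[
\kappa^{\aleph_0}\cdot|\operatorname{Cut}(\mathbb{Q}\times\kappa)|.
\]
The factor has cuts of the form ``inside the $\alpha$-th copy of $\mathbb{Q}$'', of which there are at most $2^{\aleph_0}\cdot\kappa$, and cuts between copies, of which there are at most $\kappa$. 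Hence the total is at most $\kappa^{\aleph_0}\cdot 2^{\aleph_0}=\kappa^{\aleph_0}$.

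If the bookkeeping of the factor cuts becomes delicate, the fallback is to replace the factor $\mathbb{Q}\times\kappa$ by the countable order $\mathbb{Q}$ with $\kappa$ extra coordinates interleaved. An alternative is to choose $N$ as a dense subset of $\kappa^\omega$ with the lex order, such as the eventually-zero sequences in $\kappa^\omega$ with $\mathbb{Q}$ inserted densely. The same counting goes through in that setting.

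Combining the two bounds gives $I_{T_{\mathrm{cut}}}(\kappa,N)=|\Cutstar(N)|=\kappa^{\aleph_0}$, which proves the proposition with $T=T_{\mathrm{cut}}$.
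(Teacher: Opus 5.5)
Your proposal is correct and follows essentially the same route as the paper. Both arguments use $T_{\mathrm{cut}}$ with a tree-like dense order of size $\kappa$: the paper takes $\kappa^{<\omega}\times\mathbb{Q}$ in the lexicographic order, while you take the eventually-constant sequences in $(\mathbb{Q}\times\kappa)^\omega$. Both obtain $\kappa^{\aleph_0}$ cuts in $\Cutstar$ from branches and bound all remaining cuts by coding each one with a finite or countable sequence. Your coordinatewise coding, a finite prefix plus one cut of the factor order, is a more explicit version of the paper's one-line upper bound, and it correctly accounts for the cuts at limit positions of the $\kappa$-indexed factor.
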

\begin{proof}
  Use $T_{\mathrm{cut}}$. Let $I_\kappa=\kappa^{<\omega}$ with the lexicographic order, and let $D_\kappa=I_\kappa\times\mathbb{Q}$ with the lexicographic order. Then $D_\kappa$ is a dense linear order of size $\kappa$. Its branch cuts give at least $\kappa^{\aleph_0}$ proper nonprincipal cuts, and every cut is determined by a finite or countable sequence, so it has at most $\kappa^{\aleph_0}$ such cuts. Thus $|\Cutstar(D_\kappa)|=\kappa^{\aleph_0}$,  so $I_{T_{\mathrm{cut}}}(\kappa,D_\kappa)=\kappa^{\aleph_0}$.
\end{proof}

One can even push the upper bound to $\mathrm{ded}(\kappa)^{\aleph_0}$ closer to $2^\kappa$, and this is actually the original construction I used in \autoref{thm:discrete}.

This leads to the following question.
\begin{question}
  \label{question:fixed-bound}
  Can $I_T(\kappa,\kappa)$ be controlled by a fixed cardinal when $T$ fails the Gaifman property?
\end{question}

The preceding construction cannot address this question, since it uses compactness. A construction whose behavior is independent of the size of the model would require a structure capable of coding arbitrary information. We mention a theorem of Pillay and Shelah that might be useful.
\begin{theorem}
  \label{thm:finite-bound}
  Suppose that $T$ fails the Gaifman property and that its fixed-base extension problem is an admissible expansion problem by finitely many relation symbols in the sense of Pillay--Shelah, with ambient theory $\Gamma$ and $I_T(\kappa,\kappa)=D_1(\kappa)$. Then the following are equivalent.
  \begin{enumerate}
    \item $I_T(\kappa,\kappa)<\kappa$ for some $\kappa>|\Gamma|$.
    \item There is some $n<\omega$ such that $I_T(\lambda,\lambda)\leq n$ for every $\lambda>|\Gamma|$.
  \end{enumerate}
\end{theorem}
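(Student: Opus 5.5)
This is essentially the Pillay--Shelah dichotomy for the number of expansions in the admissible setting, so the plan is to import their structure theorem rather than reprove it. The hypothesis \(I_T(\kappa,\kappa)=D_1(\kappa)\) identifies our count with their function \(D_1\), which is the supremum over models \(N\) of \(\Gamma\) of size \(\kappa\) of the number of expansions of \(N\) to a model of the admissible expansion problem, counted up to isomorphism over \(N\). So both (1) and (2) become statements about \(D_1\). The implication (2)\(\Rightarrow\)(1) is trivial: choose any \(\kappa>|\Gamma|\), necessarily infinite, and then \(I_T(\kappa,\kappa)\le n<\kappa\).

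For (1)\(\Rightarrow\)(2) I would argue as follows.

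\textbf{Step 1 (local finiteness).} Suppose \(D_1(\kappa)<\kappa\) for some \(\kappa>|\Gamma|\). Take an \(N\) of size \(\kappa\) attaining a large value, and enumerate its expansions. There are fewer than \(\kappa\) of them, while the expansions are by finitely many relations. A Löwenheim--Skolem/counting argument then shows that each expanding relation is definable over \(N\) with parameters, uniformly in a finite family of formulas. Otherwise one could build \(\kappa\) pairwise non-isomorphic expansions over \(N\), by the standard Pillay--Shelah argument that non-definability yields many automorphism-inequivalent expansions. This is exactly the place where the ``admissible'' hypothesis of Pillay--Shelah is used.

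\textbf{Step 2 (finiteness of the parameter space).} From uniform definability, the expansions over \(N\) are parametrized by tuples realizing finitely many types over \(\emptyset\). Two parameter tuples give \(N\)-isomorphic expansions iff they define the same relations. Counting is then governed by the number of equivalence classes of a definable equivalence relation on a definable set. By compactness, if that number is infinite, it has size at least \(\kappa\) in a \(\kappa\)-saturated-enough model of size \(\kappa\), which contradicts (1). Here one uses \(\kappa>|\Gamma|\) to realize types and the supremum over \(N\). Hence the number is bounded by a finite \(n\) determined by the first-order data: the finitely many formulas and the bound on classes, which is expressible in \(\Gamma\).

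\textbf{Step 3 (transfer).} Since the bound \(n\) is witnessed by first-order sentences in the language of \(\Gamma\) together with the finitely many new symbols, it holds in every model of \(\Gamma\). Therefore \(D_1(\lambda)\le n\) for every \(\lambda>|\Gamma|\), which gives (2). The failure of the Gaifman property is only used to ensure that we are in the Pillay--Shelah framework with \(D_1\) nontrivial; it plays no role beyond that.

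The main obstacle is Step 1, the passage from ``fewer than \(\kappa\) expansions'' to uniform definability. I would cite Pillay--Shelah's main theorem directly there: in the admissible, finitely-many-relations case, \(D_1(\kappa)<\kappa\) for some \(\kappa>|\Gamma|\) forces \(D_1\) to be eventually bounded by a finite constant. I would not reconstruct their tree-of-expansions argument.
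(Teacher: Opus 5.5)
Your proposal is correct and follows the paper's route. The paper's proof is only an appeal to Pillay--Shelah's Theorem~1.2 and Corollary~1.3 under the identification $I_T(\kappa,\kappa)=D_1(\kappa)$, which is what you ultimately rely on for (1)$\Rightarrow$(2), while (2)$\Rightarrow$(1) is the trivial check you give. Your Steps~1--3 are heuristic scaffolding that the citation makes unnecessary, and two of their details are loose: Step~2 needs no $\kappa$-saturated model of size $\kappa$, since compactness plus L\"owenheim--Skolem already gives a model of size $\kappa$ with $\kappa$ expansions, and the semi-definability in Step~1 concerns all models rather than a single $N$.
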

\begin{proof}
  This follows immediately from Pillay--Shelah \cite[Theorem~1.2 and Corollary~1.3]{PS85} under the stated identification.
\end{proof}

\section{Some comments on the Gaifman property and the dividing line}
\label{sec:dividing-lines}
Future work
\printbibliography

\end{document}